\setlist[enumerate]{label={\textnormal{(\roman*)}}}
\newtheorem{theorem}{Theorem}[]
\newtheorem{proposition}[theorem]{Proposition}
\newtheorem{lemma}[theorem]{Lemma}
\theoremstyle{definition}
\newtheorem{conjecture}[theorem]{Conjecture}
\renewcommand{\tt}[1]{\texttt{#1}}
\DeclareMathOperator{\Fact}{Fact}
\DeclareMathOperator{\Stab}{Stab}
\DeclareMathOperator{\RT}{RT}
\title{The Number of Threshold Words on $n$ Letters Grows Exponentially for Every $n\geq 27$}
\author{James~D.~Currie\footnote{The work of James D. Currie is supported by the Natural Sciences and Engineering Research Council of Canada (NSERC), [funding reference number 2017-03901].}, Lucas~Mol, and Narad~Rampersad\footnote{The work of Narad Rampersad is supported by the Natural Sciences and Engineering Research Council of Canada (NSERC), [funding reference number 2019-04111].}\\
\small Department of Mathematics and Statistics\\
\small The University of Winnipeg\\
\small 515 Portage Ave.\\
\small Winnipeg, MB, Canada\\
\small R3B 2E9\\
\small \{j.currie, l.mol, n.rampersad\}@uwinnipeg.ca}
\date{}
\begin{document}

\maketitle

\begin{abstract}
\noindent
For every $n\geq 27$, we show that the number of $n/(n-1)^+$-free words (i.e., threshold words) of length $k$ on $n$ letters grows exponentially in $k$.  This settles all but finitely many cases of a conjecture of Ochem.

\noindent
{\bf MSC 2010:} 68R15

\noindent
{\bf Keywords:} threshold word; repetition threshold; exponential growth; Dejean word; Dejean's conjecture; Dejean's theorem
\end{abstract}

\section{Introduction}

Throughout, we use standard definitions and notations from combinatorics on words (see~\cite{LothaireAlgebraic}).  A \emph{square} is a word of the form $xx$, where $x$ is a nonempty word.  A \emph{cube} is a word of the form $xxx$, where $x$ is a nonempty word. An \emph{overlap} is a word of the form $axaxa$, where $a$ is a letter and $x$ is a (possibly empty) word.  The study of words goes back to Thue, who demonstrated the existence of an infinite overlap-free word over a binary alphabet, and an infinite square-free word over a ternary alphabet  (see~\cite{Berstel1995}).

A language is a set of finite words over some alphabet $A$.  The \emph{combinatorial complexity} of a language $L$ is the sequence $C_L:\mathbb{N}\rightarrow\mathbb{N}$, where $C_L(k)$ is defined as the number of words in $L$ of length $k$.  We say that a language $L$ \emph{grows} exactly as the sequence $C_L(k)$ grows, be it exponentially, polynomially, etc.
Since the work of Brandenburg~\cite{Brandenburg1983}, the study of the growth of languages has been a central theme in combinatorics on words.  Given a language $L$, a key question is whether it grows exponentially (fast), or subexponentially (slow).  Brandenburg~\cite{Brandenburg1983} demonstrated that both the language of cube-free words over a binary alphabet, and the language of square-free words over a ternary alphabet, grow exponentially.  On the other hand, Restivo and Salemi~\cite{RestivoSalemi1985} demonstrated that the language of overlap-free binary words grows only polynomially.

Squares, cubes, and overlaps are all examples of \emph{repetitions} in words, and can be considered in the same general framework.  Let $w=w_1w_2\cdots w_k$ be a finite word, where the $w_i$'s are letters.  A positive integer $p$ is a \emph{period} of $w$ if $w_{i+p}=w_i$ for all $1\leq i\leq k-p$.  In this case, we say that $|w|/p$ is an \emph{exponent} of $w$, and the largest such number is called \emph{the} exponent of $w$.  For a real number $r>1$, a finite or infinite word $w$ is called $r$-free ($r^+$-free) if $w$ contains no finite factors of exponent greater than or equal to $r$ (strictly greater than $r$, respectively).

Throughout, for every positive integer $n$, let $A_n$ denote the $n$-letter alphabet $\{\tt{1},\tt{2},\dots,\tt{n}\}$.  For every $n\geq 2$, the \emph{repetition threshold} for $n$ letters, denoted $\RT(n)$, is defined by
\[
\RT(n)=\inf\{r>1\colon\ \text{there is an infinite $r^+$-free word over $A_n$}\}.
\]
Essentially, the repetition threshold describes the border between avoidable and unavoidable repetitions in words over an alphabet of $n$ letters.  The repetition threshold was first defined by Dejean~\cite{Dejean1972}.  Her 1972 conjecture on the values of $\RT(n)$ has now been confirmed through the work of many authors~\cite{Dejean1972,Pansiot1984,MoulinOllagnier1992,MohammadNooriCurrie2007,Carpi2007,CurrieRampersad2009,CurrieRampersad2009Again,CurrieRampersad2011,Rao2011}:
\[
\RT(n)=\begin{cases}
2, &\text{ if $n=2$};\\
7/4, & \text{ if $n=3$};\\
7/5, & \text{ if $n=4$};\\
n/(n-1), & \text{ if $n\geq 5$.}
\end{cases}
\]
The last cases of Dejean's conjecture were confirmed in 2011 by the first and third authors~\cite{CurrieRampersad2011}, and independently by Rao~\cite{Rao2011}.  However, probably the most important contribution was made by Carpi~\cite{Carpi2007}, who confirmed the conjecture in all but finitely many cases.

In this short note, we are concerned with the growth rate of the language of \emph{threshold words} over $A_n$.  For every $n\geq 2$, let $T_n$ denote the language of all $\RT(n)^+$-free words over $A_n$.  We call $T_n$ the \emph{threshold language} of order $n$, and we call its members \emph{threshold words} of order $n$.  Threshold words are also called \emph{Dejean words} by some authors.  For every $n\geq 2$, the threshold language $T_n$ is the minimally repetitive infinite language over $A_n$.

The threshold language $T_2$ is exactly the language of overlap-free words over $A_2$, which is known to grow only polynomially~\cite{RestivoSalemi1985}.\footnote{Currently the best known bounds on $C_{T_2}(k)$ are due to Jungers et al.~\cite{JungersProtasovBlondel2009}.}\footnote{The threshold between polynomial and exponential growth for repetition-free binary words is known to be $7/3$~\cite{KarhumakiShallit2004}.  That is, the language of $7/3$-free words over $A_2$ grows polynomially, while the language of $7/3^+$-free words over $A_2$ grows exponentially.}  However, Ochem made the following conjecture about the growth of threshold languages of all other orders.

\begin{conjecture}[Ochem~\cite{Ochem2006}]
\label{OchemConjecture}
For every $n\geq 3$, the language $T_n$ of threshold words of order $n$ grows exponentially.
\end{conjecture}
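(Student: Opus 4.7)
The plan is to split by $n$, since $\RT(n)$ and the available techniques differ across the ranges $n\in\{3,4\}$, $5\leq n\leq 26$, and $n\geq 27$. For $n\geq 5$ (where $\RT(n)=n/(n-1)$), the unifying reduction is Pansiot's encoding: up to a fixed permutation of letters, each $(n/(n-1))^+$-free word over $A_n$ is determined by its sequence of forward differences $w_{i+1}-w_i$, and only a small restricted set of differences is compatible with $(n/(n-1))^+$-freeness. This encodes $T_n$, modulo finitely many short-word exceptions, into a language $P_n$ over a bounded alphabet whose forbidden factors form a finite list computable from the exponent bound. Exponential growth of $T_n$ then follows from exponential growth of $P_n$, reducing the problem to one over a much smaller alphabet.

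For $n\geq 27$, I would build on Carpi's construction, which produces a single infinite word in $T_n$ via an iterated morphism preserving the Pansiot constraints. To upgrade a single word to an exponentially growing language, the plan is to exhibit two morphisms $h_0,h_1$ on a common auxiliary alphabet, both preserving $P_n$. Every infinite binary sequence $\beta$ then yields a distinct infinite word through the iterated composition of the $h_{\beta_i}$ applied to a fixed seed letter, and a standard injectivity-plus-counting argument produces $2^{\Omega(k)}$ distinct factors of length $k$ in $T_n$. Verification that $h_0,h_1$ preserve $P_n$ is a finite mechanical check on the images of short factors, so the real content is locating a suitable pair $(h_0,h_1)$ that works uniformly in $n\geq 27$.

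For $n\in\{3,4\}$, where the thresholds are $7/4$ and $7/5$ respectively, I would invoke existing results on the exponential growth of $T_3$ and $T_4$, or, failing that, give direct morphic constructions tailored to these small alphabets. For the intermediate range $5\leq n\leq 26$, I would proceed case by case with computer assistance: for each $n$, locate a finite subset of $P_n$ closed under concatenation along a graph whose transition matrix has spectral radius strictly greater than one, and verify correctness by exhaustively checking the Pansiot pattern list on the images. The main obstacle is precisely this intermediate range: Carpi-style uniform methods do not reach that far down, so each of the twenty-two values must be handled individually, and the required automata may be large enough that their discovery and verification form the bulk of the work. A secondary obstacle is bookkeeping: translating the ``growth of $P_n$'' conclusion back to growth of $T_n$ requires controlling the finitely many Pansiot-exceptional words and the degree of the encoding, which must be done uniformly across the three ranges in order for the overall argument to cover every $n\geq 3$.
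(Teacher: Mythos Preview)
The statement you are attempting to prove is a \emph{conjecture} in the paper, not a theorem: the paper does not prove it in full. The paper's contribution is Theorem~\ref{main}, which covers only $n\geq 27$; combined with prior work (Ochem for $n\in\{3,4\}$, Kolpakov--Rao for $5\leq n\leq 10$, Tunev--Shur for odd $n\leq 101$), the even values $n\in\{12,14,16,18,20,22,24,26\}$ remain open. So there is no ``paper's own proof'' of the full statement to compare against.

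Your plan for the intermediate range $5\leq n\leq 26$ is where the genuine gap lies, and it is not merely a matter of computational effort. The paper's concluding section explains concretely why the Carpi/Pansiot reduction you invoke \emph{fails} for most of these values: for instance, at $n=26$ one has $m=3$, and a computer search shows that (a) $f_n(a\tt{3})$ already contains a forbidden $15$-stabilizing word for every letter $a$, and (b) the longest word on $\{\tt{1},\tt{2}\}$ avoiding $\psi_n$-kernel repetitions has length $15$; hence only finitely many words in $A_m^*$ avoid both obstructions, and no exponential family can be extracted through $f_n$. Analogous obstructions occur for $n\in\{12,14,16,18,20,24\}$. Your proposal to ``locate a finite subset of $P_n$ closed under concatenation along a graph with spectral radius $>1$'' does not engage with this: the reduction to the auxiliary alphabet simply does not produce an infinite, let alone exponential, language in these cases, so a genuinely different idea is required.

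For $n\geq 27$, your approach and the paper's are both built on Carpi, but the mechanism for injecting exponential freedom differs. You propose finding two morphisms $h_0,h_1$ whose arbitrary compositions preserve the Pansiot constraints. The paper instead splits into two subcases. For $n\geq 33$ it takes Carpi's single infinite word $\alpha$ over $A_m$ and observes that the letters at positions $\equiv 2\pmod 4$ can be freely toggled between $\tt{1}$ and $\tt{2}$ without creating $\psi_n$-kernel repetitions (Lemma~\ref{Analogous} and Proposition~\ref{zeta}); this immediately gives $2^{k/4}$ good words of length $k$ in $A_m^*$. For $27\leq n\leq 32$ (where $m=4$) it uses a $3$-uniform set-valued substitution $g$ with $|g(\tt{4})|=2$, and shows by a descent argument (Proposition~\ref{m4}) that no word in the generated language is a $\psi_n$-kernel repetition. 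Both of the paper's devices are closer to ``perturb one explicit construction'' than to your ``compose two morphisms freely''; the paper's route has the advantage that the verification for $n\geq 33$ is essentially a one-line modification of Carpi's original lemma, whereas your route would require exhibiting and verifying a second morphism uniformly in $n$, which you have not done.
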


Conjecture~\ref{OchemConjecture} has been confirmed for $n\in\{3,4\}$ by Ochem~\cite{Ochem2006}, for $n\in\{5,6,\dots,10\}$ by Kolpakov and Rao~\cite{KolpakovRao2011}, and for all odd $n$ less than or equal to $101$ by Tunev and Shur~\cite{TunevShur2012}.  In this note, we confirm Conjecture~\ref{OchemConjecture} for every $n\geq 27$.

\begin{theorem}\label{main}
For every $n\geq 27$, the language $T_n$ of threshold words of order $n$ grows exponentially.
\end{theorem}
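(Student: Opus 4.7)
The strategy I would follow is morphism-based, in the spirit of Carpi's construction~\cite{Carpi2007} that resolved Dejean's conjecture for all but finitely many alphabet sizes. For each fixed $n\geq 27$ I would aim to build a length-uniform morphism $h:\Sigma^*\to A_n^*$ from a small alphabet $\Sigma$ with $|\Sigma|\geq 2$, together with an exponentially growing language $L\subseteq\Sigma^*$, such that $h$ is injective on $L$ and $h(L)\subseteq T_n$. Because $h$ has some common block length $m=|h(a)|$, the inequality $C_{T_n}(mk)\geq|L\cap\Sigma^k|$ immediately transfers exponential growth of $L$ to exponential growth of $T_n$, which is what Theorem~\ref{main} demands.

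The design of $h$ is naturally guided by Pansiot's encoding of infinite threshold words for $n\geq 5$, in which such a word is parsed into blocks and each block corresponds to a discrete ``step'' in a small-alphabet code. For $n$ sufficiently large there are at least two valid continuation steps at each block, which I would use to define the images $h(a)$ for $a\in\Sigma$. To control repetitions in the images I would introduce a finite set $F_n$ of forbidden $\Sigma$-factors and set $L$ to be the $F_n$-free words in $\Sigma^*$. The factors in $F_n$ arise from a case analysis on repetitions $x^p$ of exponent exceeding $n/(n-1)$ that could occur in $h(w)$: for $|x|$ small relative to $m$, direct inspection of the images of short $\Sigma$-words enumerates a finite list of ``dangerous'' patterns to forbid; for $|x|$ large, Carpi's kernel-repetition technique would be used to pull $x^p$ back through $h$ and show that it corresponds to a small, scale-invariant constraint on $w$, also realizable as a factor to forbid in $\Sigma^*$. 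Injectivity of $h$ is automatic once $h(a)\neq h(b)$ for $a\neq b$, which I would arrange by construction.

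The main obstacle is the quantitative control needed at the end: one must show that the set $F_n$ produced by the analysis above is small enough, in cardinality and in the length of its members uniformly in $n\geq 27$, that the $F_n$-free language $L$ grows exponentially. I would attack this either by a transfer-matrix/automaton lower bound on $L$, or, more elegantly, by exhibiting an exponentially large sublanguage of $L$ --- for instance, by defining an auxiliary morphism from the binary cube-free words (which grow exponentially by Brandenburg~\cite{Brandenburg1983}) into $\Sigma^*$ and verifying that its image avoids $F_n$. The bound $n\geq 27$ is most likely to emerge at exactly this step, as the threshold at which the worst-case kernel repetitions first become mild enough for such a sublanguage to exist; for smaller $n$ one would expect $F_n$ to grow rapidly enough to stifle exponential growth under this particular morphism, which is consistent with the paper's restriction to $n\geq 27$ and the fact that the remaining cases are handled separately in the literature. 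The overall proof thus reduces to upgrading Carpi's single infinite threshold word to an exponentially large family without losing $\RT(n)^+$-freeness.
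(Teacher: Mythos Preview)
Your high-level plan is aligned with the paper: feed an exponentially growing source language through Carpi's pipeline $\gamma_n\circ f_n$ and transfer growth. But the proposal has a real gap at the step where you assert that the long-period repetitions ``correspond to a small, scale-invariant constraint on $w$, also realizable as a factor to forbid in $\Sigma^*$.'' After Carpi's reduction, what must be avoided in the source language $A_m^*$ (with $m=\lfloor (n-3)/6\rfloor$) is the family of $\psi_n$-\emph{kernel repetitions}: words with some period $q$ whose length-$q$ factor lies in $\ker(\psi_n)$ and satisfying a linear length inequality. This is not a finite forbidden-factor set, and there is no reason to expect it to reduce to one; your $F_n$-free scheme, and the transfer-matrix lower bound you propose for it, do not apply as stated.

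What the paper actually uses in place of a forbidden set is the arithmetic description of the kernel: $v\in\ker(\psi_n)$ iff $4\mid |v|_a$ for every letter $a$ (Carpi's Lemma~9.1). This is the missing key lemma in your plan. For $n\geq 33$ the paper exploits it by taking Carpi's specific infinite word $\alpha\in A_m^\omega$ and observing that the positions $i\equiv 2\pmod 4$ all carry the letter $\tt 2$; flipping any subset of these to $\tt 1$ yields a language $Z_m$ of size $\Omega(2^{k/4})$ in length $k$, and Carpi's original divisibility argument survives essentially verbatim to show no word in $Z_m$ contains a $\psi_n$-kernel repetition. For $27\leq n\leq 32$ one has $m=4$, the previous trick is unavailable, and the paper instead iterates a $3$-uniform branching substitution on $A_4$ (one letter has two images), proving absence of kernel repetitions by a descent through the substitution together with a bounded computer search. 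Your suggestion of routing binary cube-free words through an auxiliary morphism is close in spirit to this second case, but you would still have to verify avoidance of the infinite kernel-repetition family, not a finite $F_n$; the paper's descent-plus-finite-check is exactly the device that makes this finite.
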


The layout of the remainder of the note is as follows.  In Section~\ref{Review}, we summarize the work of Carpi~\cite{Carpi2007} in confirming all but finitely many cases of Dejean's conjecture.  In Section~\ref{Construction}, we establish Theorem~\ref{main} with constructions that rely heavily on the work of Carpi.  
We conclude with a discussion of problems related to the rate of growth of threshold languages.

\section{Carpi's reduction to \boldmath{$\psi_n$}-kernel repetitions}\label{Review}

In this section, let $n\geq 2$ be a fixed integer.  Pansiot~\cite{Pansiot1984} was first to observe that if a word over the alphabet $A_n$ is $(n-1)/(n-2)$-free, then it can be encoded by a word over the binary alphabet $B=\{\tt{0},\tt{1}\}$.  For consistency, we use the notation of Carpi~\cite{Carpi2007} to describe this encoding.  Let $\mathbb{S}_n$ denote the symmetric group on $A_n$, and define the morphism $\varphi_n:B^*\rightarrow \mathbb{S}_n$ by
\begin{align*}
\varphi_n(\tt{0})&=\begin{pmatrix}
\tt{1} & \tt{2} & \cdots & \tt{n-1}
\end{pmatrix}; \text{ and}\\
\varphi_n(\tt{1})&=\begin{pmatrix}
\tt{1} & \tt{2} & \cdots & \tt{n}
\end{pmatrix}.
\end{align*}
Now define the map $\gamma_n:B^*\rightarrow A_n^*$ by
\[
\gamma_n(b_1b_2\cdots b_k)=a_1a_2\cdots a_k,
\]
where 
\[
a_i\varphi_n(b_1b_2\cdots b_i)=\tt{1}
\]
for all $1\leq i\leq k$.  To be precise, Pansiot proved that if a word $\alpha\in A_n^*$ is $(n-1)/(n-2)$-free, then $\alpha$ can be obtained from a word of the form $\gamma_n(u)$, where $u\in B^*$, by renaming the letters.

Let $u\in B^*$, and let $\alpha=\gamma_n(u)$.  Pansiot showed that if $\alpha$ has a factor of exponent greater than $n/(n-1)$, then either the word $\alpha$ itself contains a \emph{short repetition}, or the binary word $u$ contains a \emph{kernel repetition} (see~\cite{Pansiot1984} for details).  Carpi reformulated this statement so that both types of forbidden factors appear in the binary word $u$.  Let $k\in\{1,2,\dots,n-1\}$, and let $v\in B^+$.  Then $v$ is called a \emph{$k$-stabilizing word} (of order $n$) if $\varphi_n(v)$ fixes the points $\tt{1},\tt{2},\dots,\tt{k}$.  Let $\Stab_n(k)$ denote the set of $k$-stabilizing words of order $n$.  The word $v$ is called a \emph{kernel repetition} (of order $n$) if it has period $p$ and a factor $v'$ of length $p$ such that $v'\in \ker(\varphi_n)$ and $|v|>\frac{np}{n-1}-(n-1)$.
Carpi's reformulation of Pansiot's result is the following.

\begin{proposition}[Carpi~{\cite[Proposition~3.2]{Carpi2007}}]
\label{CarpiReform}
Let $u\in B^*$.  If a factor of $\gamma_n(u)$ has exponent larger than $n/(n-1)$, then $u$ has a factor $v$ satisfying one of the following conditions:
\begin{enumerate}
    \item $v\in \Stab_n(k)$ and $0<|v|<k(n-1)$ for some $1\leq k\leq n-1$; or
    \item $v$ is a kernel repetition of order $n$.
\end{enumerate}
\end{proposition}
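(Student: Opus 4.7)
The plan is to lift the periodicity of a bad factor in $\gamma_n(u)$ back to $u$ via the cocycle identity for $\varphi_n$, thereby making both outcomes of Pansiot's dichotomy visible as factors of $u$. Suppose $\alpha = a_{i+1}\cdots a_{i+\ell}$ is a factor of $\gamma_n(u)$ with period $p$ and $\ell > np/(n-1)$. From the defining equation $a_j\varphi_n(b_1\cdots b_j) = \tt{1}$ together with the homomorphism property of $\varphi_n$, a short computation shows that $a_l = a_{l+p}$ is equivalent to the permutation $\pi_l := \varphi_n(b_{l+1}\cdots b_{l+p})$ fixing the point $a_l$. Hence each length-$p$ window $w_l = b_{l+1}\cdots b_{l+p}$ of $u$ belongs to the stabilizer of the corresponding $a_l$.

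The subsequent case analysis is driven by how the $\pi_l$ evolve as $l$ advances; since $\pi_{l+1} = \varphi_n(b_{l+1})^{-1}\pi_l\varphi_n(b_{l+p+1})$, we have $\pi_l = \pi_{l+1}$ exactly when $b_{l+1} = b_{l+p+1}$. When one period of $\alpha$ is rich enough to contain at least $n-1$ distinct letters, the constraint that each $\pi_l$ fixes its $a_l$ propagates to force all $\pi_l$'s to equal a single permutation $\pi$ of $S_n$ fixing $n-1$ points, so $\pi$ must be the identity; by the equivalence above, the underlying binary factor $v$ of $u$ then has period $p$, some length-$p$ subword of $v$ lies in $\ker(\varphi_n)$, and $|v| > np/(n-1) - (n-1)$ makes $v$ a kernel repetition, which is case (ii). Otherwise, $p$ is too small for this mechanism, and one tracks how many binary symbols are required for the successive windows to jointly stabilize $\{\tt{1}\}$, $\{\tt{1},\tt{2}\}$, \ldots, $\{\tt{1},\ldots,\tt{k}\}$ in turn; this yields a prefix $v$ of the underlying binary factor satisfying $v \in \Stab_n(k)$ and $|v| < k(n-1)$, which is case (i).

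The main obstacle is the small-period case: establishing the sharp length bound $|v| < k(n-1)$ requires a careful combinatorial analysis of how the two generators $\varphi_n(\tt{0}) = (\tt{1}\,\tt{2}\,\cdots\,\tt{n-1})$ and $\varphi_n(\tt{1}) = (\tt{1}\,\tt{2}\,\cdots\,\tt{n})$ combine, at minimum cost in binary symbols, to adjoin each successive fixed point. The large-period case is comparatively direct once one has the identity-permutation argument; the rest of the proof is essentially the translation of Pansiot's dichotomy into the uniform \emph{factor of $u$} formulation of the proposition.
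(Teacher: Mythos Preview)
The paper does not give its own proof of this proposition; it is quoted verbatim from Carpi~\cite[Proposition~3.2]{Carpi2007} (building on Pansiot~\cite{Pansiot1984}) and used as a black box. So there is nothing in the present paper to compare your attempt against.

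That said, your sketch has a genuine gap at exactly the point you label ``comparatively direct.'' You correctly derive the cocycle relation $\pi_{l+1}=\varphi_n(b_{l+1})^{-1}\pi_l\,\varphi_n(b_{l+p+1})$ and note that $\pi_l=\pi_{l+1}$ iff $b_{l+1}=b_{l+p+1}$. But then you assert that when a period of $\alpha$ sees $n-1$ distinct letters, the fixed-point constraints ``propagate to force all $\pi_l$'s to equal a single permutation $\pi$.'' Nothing you have written justifies this: equality of the $\pi_l$'s is equivalent to $u$ having period $p$, which is precisely the conclusion you are aiming for, so the argument as stated is circular. The actual passage from ``each $\pi_l$ fixes $a_l$'' to ``some length-$p$ window of $u$ lies in $\ker(\varphi_n)$ and $u$ inherits period $p$'' uses the specific form of the two generating cycles $\varphi_n(\tt{0})$ and $\varphi_n(\tt{1})$ (they agree on all but two points of $A_n$), together with the fact that in the Pansiot encoding any $n-1$ consecutive letters of $\gamma_n(u)$ are pairwise distinct; this is the substantive combinatorial step, not a formality.

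Relatedly, your case split is not the one that drives the proof. Because any $n-1$ consecutive letters of $\gamma_n(u)$ are distinct, a factor with exponent exceeding $n/(n-1)$ automatically has period $p\ge n-1$, so ``$p$ too small'' is never the source of condition~(i). In Carpi's argument, the short $k$-stabilizing factors of $u$ correspond to Pansiot's \emph{short repetitions} in $\gamma_n(u)$ (configurations where a letter recurs before all of $A_n$ has been seen), and the translation of that condition into ``$v\in\Stab_n(k)$ with $|v|<k(n-1)$'' is a separate lemma, not a small-period residue of the kernel-repetition analysis. Your closing paragraph correctly flags that bound as the delicate part, but the route you outline to reach it does not match how it actually arises.
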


Now assume that $n\geq 9$, and define $m=\lfloor (n-3)/6\rfloor$ and $\ell=\lfloor n/2\rfloor$.  Carpi~\cite{Carpi2007} defines an $(n-1)(\ell+1)$-uniform morphism $f_n:A_m^*\rightarrow B^*$ with the following extraordinary property.

\begin{proposition}[Carpi~{\cite[Proposition~7.3]{Carpi2007}}]
\label{CarpiShort}
Suppose that $n\geq 27$, and let $w\in A_m^*$. Then for every $k\in\{1,2,\dots,n-1\}$, the word $f_n(w)$ contains no $k$-stabilizing word of length smaller than $k(n-1)$.
\end{proposition}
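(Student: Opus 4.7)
The plan is to argue by contradiction. Suppose that $v$ is a factor of $f_n(w)$ that is $k$-stabilizing for some $k\in\{1,\dots,n-1\}$ with $|v|<k(n-1)\leq(n-1)^2$. Because $f_n$ is $(n-1)(\ell+1)$-uniform with $\ell=\lfloor n/2\rfloor$, and because a short calculation gives $2(n-1)(\ell+1)>(n-1)^2$ for every $n\geq 27$, the factor $v$ straddles at most three consecutive blocks of $f_n$. I can therefore write
\[
v \;=\; s\, f_n(y)\, p,
\]
where $y$ is a factor of $w$ with $|y|\leq 1$, $s$ is a (possibly empty) proper suffix of the block image preceding $f_n(y)$ in $f_n(w)$, and $p$ is a (possibly empty) proper prefix of the block image following $f_n(y)$ in $f_n(w)$.

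Next I would analyse
\[
\varphi_n(v)\;=\;\varphi_n(s)\,\varphi_n(f_n(y))\,\varphi_n(p)
\]
and demand that this permutation fix each of $\tt 1,\tt 2,\dots,\tt k$. Here the explicit construction of $f_n$ (not reproduced in the excerpt) is essential: Carpi chooses $f_n$ so that the permutations $\varphi_n(f_n(a))$ for $a\in A_m$, together with those attached to the nontrivial prefixes and suffixes of the block images, have a tightly controlled structure in $\mathbb{S}_n$. Tabulating these permutations and forming the relevant products sharply restricts the candidate triples $(s,y,p)$; for each surviving triple one then verifies the length bound $|s|+|f_n(y)|+|p|\geq k(n-1)$, contradicting the assumed bound on $|v|$. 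The argument naturally splits by $k$: for small $k$, the target $k(n-1)$ is well below one block length, so it suffices to rule out short $k$-stabilizing prefixes and suffixes of a single block image; for $k$ close to $n-1$, the target approaches $(n-1)^2\approx 2(n-1)(\ell+1)$, and the delicate cases are those in which $v$ nearly covers two whole blocks, which demands careful control of how $\varphi_n(f_n(a))$ behaves across block boundaries.

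The main obstacle is this bookkeeping step. The conclusion is sensitive to Carpi's precise choice of $f_n$, and at bottom the proof amounts to verifying explicit identities in $\mathbb{S}_n$ satisfied by the block images $\varphi_n(f_n(a))$ and their partial products. The overarching strategy — cut $v$ across at most three blocks, then inspect — is fairly routine; the real creative input lies in the choice of $f_n$ itself, engineered so that no short product of partial block permutations can fix an initial segment of $A_n$ pointwise unless its length is already at least $k(n-1)$.
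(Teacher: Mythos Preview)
The paper does not prove this proposition; it quotes it from Carpi~\cite{Carpi2007} (for $n\geq 30$) and from Currie--Rampersad~\cite{CurrieRampersad2009Again} (for the extension to $27\leq n\leq 29$), as noted in the paragraph immediately following the statement. So there is no in-paper proof to compare against.

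As for your sketch on its own merits: the block-decomposition step is fine --- since $|v|<(n-1)^2<2(n-1)(\ell+1)$, the factor $v$ meets at most three consecutive $f_n$-blocks, giving $v=s\,f_n(y)\,p$ with $|y|\leq 1$. But the step you label ``tabulating these permutations'' hides the entire content of the result, and as written it cannot close the argument. The difficulty is that $n$ ranges over infinitely many values and $f_n$ changes with $n$; a tabulation for each $n$ separately is not a proof for all $n\geq 27$. Carpi's actual argument for $n\geq 30$ is computation-free: he designs $f_n$ so that each block image has a specific internal structure (built from runs of $\tt 0$'s and $\tt 1$'s of prescribed lengths), and then proves a sequence of lemmas tracking how $\varphi_n$ acts on prefixes and suffixes of such blocks, uniformly in $n$. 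The case analysis is organised by the residue of $|v|$ modulo $n-1$ and the position of $v$ relative to block boundaries, not by enumerating triples $(s,y,p)$. The extension to $n\in\{27,28,29\}$ in~\cite{CurrieRampersad2009Again} does rely on a substantial computer check, which is closer in spirit to your ``tabulation,'' but only for those three specific values of $n$.

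In short, your outline correctly locates where the work lies but does not supply it, and the method you propose (finite enumeration of triples) is not how Carpi handles the infinite family $n\geq 30$.
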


We note that Proposition~\ref{CarpiShort} was proven by Carpi~\cite{Carpi2007} in the case that $n\geq 30$ in a computation-free manner.  The improvement to $n\geq 27$ stated here was achieved later by the first and third authors~\cite{CurrieRampersad2009Again}, using lemmas of Carpi~\cite{Carpi2007} along with a significant computer check.

Proposition~\ref{CarpiShort} says that for \emph{every} word $w\in A_m^*$, no factor of $f_n(w)$ satisfies condition (i) of Proposition~\ref{CarpiReform}. 
Thus, we need only worry about factors satisfying condition (ii) of Proposition~\ref{CarpiReform}, i.e., kernel repetitions.  To this end, define the morphism $\psi_n:A_m^*\rightarrow \mathbb{S}_n$ by $\psi_n(v)=\varphi_n(f_n(v))$ for all $v\in A_m^*$.  A word $v\in A_m^*$ is called a \emph{$\psi_n$-kernel repetition} if it has a period $q$ and a factor $v'$ of length $q$ such that $v'\in \ker(\psi_n)$ and $(n-1)(|v|+1)\geq nq-3$.  Carpi established the following result.

\begin{proposition}[Carpi~{\cite[Proposition~8.2]{Carpi2007}}]
\label{CarpiKernel}
Let $w\in A_m^*$.  If a factor of $f_n(w)$ is a kernel repetition, then a factor of $w$ is a $\psi_n$-kernel repetition.
\end{proposition}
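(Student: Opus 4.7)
Let $v$ be a factor of $f_n(w)$ that is a kernel repetition, so $v$ has period $p$ and admits a factor $v' \in \ker(\varphi_n)$ of length $p$ with $|v| > \tfrac{np}{n-1} - (n-1)$. Let $\tilde w = \tilde w_1 \tilde w_2 \cdots \tilde w_N$ be the shortest factor of $w$ such that $v$ is a factor of $f_n(\tilde w)$; set $B_i = f_n(\tilde w_i)$ (a binary word of length $L = (n-1)(\ell+1)$), and let $o_1, o_2 \in \{1, \ldots, L\}$ be the offsets at which $v$ begins in $B_1$ and ends in $B_N$, so $|v| = (N-1)L + o_2 - o_1 + 1$. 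The plan proceeds in three stages: first I show that $L \mid p$ and write $p = qL$; next I lift $v'$ to a factor $\tilde v' \in \ker(\psi_n)$ of $\tilde w$ of length $q$; finally I exhibit a factor $\tilde v$ of $\tilde w$ (containing $\tilde v'$) of period $q$ for which $(n-1)(|\tilde v| + 1) \geq nq - 3$.

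For the divisibility $L \mid p$, the crucial input is Proposition~\ref{CarpiShort}. Since $v$ has period $p$ and $v' \in \ker(\varphi_n)$ has length $p$, every length-$p$ factor of $v$ is a cyclic rotation of $v'$, and conjugation in $\mathbb{S}_n$ shows each is also in $\ker(\varphi_n)$. If $L \nmid p$, then any length-$p$ factor of $v$ starting at a block boundary of $f_n(\tilde w)$ ends mid-block; comparing such a factor with a second length-$p$ factor of $v$ whose starting position differs by $L$, the resulting ``overhang'' produces a short binary word (a suffix of one block followed by a prefix of another) whose image under $\varphi_n$ fixes a letter of $A_n$. This yields a $k$-stabilizing factor of $f_n(w)$ of length smaller than $k(n-1)$ for some $k \in \{1, \ldots, n-1\}$, contradicting Proposition~\ref{CarpiShort}.

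With $p = qL$, I cyclically shift $v'$ within $v$ to a position starting at a block boundary of $f_n(\tilde w)$; the slack $|v| - p$ accommodates the shift. The resulting block-aligned factor equals $f_n(\tilde v')$ for a length-$q$ factor $\tilde v'$ of $\tilde w$, with $\psi_n(\tilde v') = \varphi_n(f_n(\tilde v')) = \varphi_n(v') = \mathrm{id}$. Meanwhile, the period $p = qL$ forces $B_k[s] = B_{k+q}[s]$ whenever both positions lie in $v$, so $B_k = B_{k+q}$ for every $k$ corresponding to a block strictly interior to $v$, hence $\tilde w_k = \tilde w_{k+q}$ by the injectivity of $f_n$ on $A_m$. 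Careful handling of at most one boundary block gives a factor $\tilde v$ of $\tilde w$ of length at least $N-1$ containing $\tilde v'$ and having period $q$; then from $|v| > \tfrac{nqL}{n-1} - (n-1)$ and $|v| \leq NL$, using $L \geq n(n-1)/2$ to bound $(n-1)^2/L < 3$, we conclude $(n-1)(|\tilde v|+1) \geq (n-1)N \geq nq - 3$, as required.

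The main obstacle is establishing $L \mid p$: this is the only step invoking Proposition~\ref{CarpiShort}, and the overhang construction producing a short stabilizing factor from a misaligned kernel repetition requires delicate combinatorial bookkeeping inside $\mathbb{S}_n$. The other stages are routine consequences of the $L$-uniformity of $f_n$ and the definition $\psi_n = \varphi_n \circ f_n$.
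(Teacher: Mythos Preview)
The paper does not prove this proposition; it is quoted verbatim from Carpi~\cite[Proposition~8.2]{Carpi2007} with no argument given, so there is no in-paper proof to compare your attempt against.

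That said, your sketch has a genuine gap at exactly the point you yourself identify as the obstacle. The divisibility $L\mid p$ is the heart of the matter, and you do not actually prove it: you assert that comparing two block-aligned length-$p$ factors of $v$ produces an ``overhang'' that is a short $k$-stabilizing word, but you never specify which word this is, which $k$ it stabilizes, or why its length is below $k(n-1)$. Chasing the relations one gets from two such factors $u_1=B_{j}\cdots B_{j+q-1}\pi$ and $u_2=B_{j+1}\cdots B_{j+q}\pi'$ (with $\pi,\pi'$ proper block-prefixes of length $r$) only yields an identity relating $\varphi_n(\pi)$, $\varphi_n(\pi')$, and two $\psi_n$-images; it does not obviously hand you a factor of $f_n(w)$ violating Proposition~\ref{CarpiShort}. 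Moreover, you need $|v|\geq p+L$ (or thereabouts) for two block-aligned length-$p$ factors to exist inside $v$, and the kernel-repetition inequality only gives $|v|-p>\tfrac{p}{n-1}-(n-1)$, which can be arbitrarily small.

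A second issue is that invoking Proposition~\ref{CarpiShort} makes your argument valid only for $n\geq 27$, whereas Carpi's Proposition~8.2 is stated and proved for all $n\geq 9$. Carpi's own proof does not go through the stabilizing-word bound; it exploits the explicit block structure of $f_n$ (each $f_n(a)$ has a rigidly prescribed form, so one can read off directly when a factor of $f_n(w)$ lies in $\ker(\varphi_n)$ and how it aligns with block boundaries). Once $L\mid p$ is in hand, your remaining steps---lifting to $\tilde v'\in\ker(\psi_n)$ and the arithmetic yielding $(n-1)(|\tilde v|+1)\geq nq-3$ via $(n-1)^2/L<2$---are correct and essentially match what one finds in Carpi's paper.
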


In other words, if $w\in A_m^*$ contains no $\psi_n$-kernel repetitions, then no factor of $f_n(w)$ satisfies condition (ii) of Proposition~\ref{CarpiReform}.  Altogether, we have the following theorem, which we state formally for ease of reference.

\begin{theorem}\label{CarpiMain}
Suppose that $n\geq 27$.  If $w\in A_m^*$ contains no $\psi_n$-kernel repetitions, then $\gamma_n(f_n(w))$ is $\RT(n)^+$-free.
\end{theorem}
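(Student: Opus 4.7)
The plan is to combine the three propositions of the section into a single contradiction argument, since Theorem~\ref{CarpiMain} is essentially a packaging result. Assume for contradiction that $w \in A_m^*$ contains no $\psi_n$-kernel repetitions, yet $\gamma_n(f_n(w))$ fails to be $\RT(n)^+$-free. Because $n \geq 27 \geq 5$, we have $\RT(n) = n/(n-1)$, so this failure means that $\gamma_n(f_n(w))$ contains a factor of exponent strictly greater than $n/(n-1)$.

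I would then apply Proposition~\ref{CarpiReform} to the binary word $u = f_n(w)$. This produces a factor $v$ of $f_n(w)$ that falls into one of two cases: either (i) $v \in \Stab_n(k)$ with $0 < |v| < k(n-1)$ for some $k \in \{1,2,\dots,n-1\}$, or (ii) $v$ is a kernel repetition of order $n$.

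Case (i) is ruled out immediately by Proposition~\ref{CarpiShort}, which, since $n \geq 27$, guarantees that $f_n(w)$ contains no $k$-stabilizing word of length smaller than $k(n-1)$ for any such $k$. Thus case (ii) must hold, so $f_n(w)$ has a factor which is a kernel repetition of order $n$. Proposition~\ref{CarpiKernel} then transfers this into a $\psi_n$-kernel repetition appearing as a factor of $w$ itself, directly contradicting our assumption on $w$. The contradiction completes the proof.

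There is no genuine obstacle here: the substantive work — Pansiot's encoding, Carpi's choice of $f_n$, the verification of the stabilizer bound by Carpi and by the first and third authors, and Carpi's translation of kernel repetitions through $f_n$ — has already been done in the cited propositions. The only thing to be careful about is bookkeeping: checking that the hypothesis $n \geq 27$ is strong enough to invoke Proposition~\ref{CarpiShort} (it is, by construction) and that the identification $\RT(n) = n/(n-1)$ is valid for $n \geq 27$ (it is, by Dejean's theorem as stated in the introduction), so that "exponent greater than $n/(n-1)$" in Proposition~\ref{CarpiReform} aligns with "not $\RT(n)^+$-free" in the conclusion.
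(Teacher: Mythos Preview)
Your proposal is correct and matches the paper's approach exactly: the paper states Theorem~\ref{CarpiMain} ``for ease of reference'' immediately after noting that Propositions~\ref{CarpiReform}, \ref{CarpiShort}, and \ref{CarpiKernel} together yield it, without writing out a separate proof. Your contrapositive chaining of these three propositions, together with the identifications $\RT(n)=n/(n-1)$ and $u=f_n(w)$, is precisely the intended argument.
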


Finally, we note that the morphism $f_n$ is defined in such a way that  the kernel of $\psi_n$ has a very simple structure.

\begin{lemma}[Carpi~{\cite[Lemma~9.1]{Carpi2007}}]
\label{Carpi4}
If $v\in A_m^*$, then $v\in \ker(\psi_n)$ if and only if $4$ divides $|v|_a$ for every letter $a\in A_m$.
\end{lemma}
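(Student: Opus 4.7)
The plan is to show that $\psi_n$ factors through the map $\theta\colon A_m^* \to (\mathbb{Z}/4\mathbb{Z})^m$ sending a word to its Parikh vector reduced modulo $4$, and that the induced homomorphism on $(\mathbb{Z}/4\mathbb{Z})^m$ is \emph{injective}. The equality $\ker(\psi_n) = \ker(\theta)$, which is precisely the statement of the lemma, follows immediately from these two facts.

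To execute this plan, I would first use Carpi's explicit definition of the morphism $f_n$ to compute, for each letter $a\in A_m$, the permutation $\psi_n(a) = \varphi_n(f_n(a)) \in \mathbb{S}_n$. The aim of this computation is to extract two structural properties: (a) $\psi_n(a)$ has order exactly $4$ for every $a\in A_m$, and (b) the permutations $\{\psi_n(a) : a\in A_m\}$ pairwise commute. Granting (a) and (b), for any $v\in A_m^*$ the image $\psi_n(v)$ may be rewritten as
\[
\psi_n(v) \;=\; \prod_{a\in A_m}\psi_n(a)^{|v|_a},
\]
with the product unambiguous because the factors commute and each exponent matters only modulo $4$. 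This immediately yields the ``if'' direction: when $4$ divides $|v|_a$ for every $a$, every factor in the product is the identity.

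For the ``only if'' direction it remains to verify that the $\psi_n(a)$ are independent generators of a subgroup of $\mathbb{S}_n$ isomorphic to $(\mathbb{Z}/4\mathbb{Z})^m$---that is, $\prod_a \psi_n(a)^{k_a} = \mathrm{id}$ forces $k_a \equiv 0 \pmod 4$ for every $a$. The natural approach is to locate, for each letter $a$, a small subset of $A_n$ whose orbit under $\psi_n(a)$ has length $4$ and is pointwise fixed by $\psi_n(b)$ for $b\neq a$; inspecting how $\psi_n(v)$ permutes this subset then reads off $|v|_a \bmod 4$ individually.

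The main obstacle is that all three inputs---order exactly $4$, commutativity, and independence---are not formal consequences of $\varphi_n$ but rely entirely on the specific combinatorial design of $f_n$. In particular, commutativity is striking, because $\varphi_n(\tt{0})$ and $\varphi_n(\tt{1})$ are an $(n-1)$-cycle and an $n$-cycle and generate a highly non-abelian subgroup of $\mathbb{S}_n$; Carpi's words $f_n(a)$ are constructed precisely so that their $\varphi_n$-images collapse into a small abelian subgroup of exponent $4$. Once the definition of $f_n$ is unpacked, each of these verifications reduces to a direct, if somewhat intricate, calculation in $\mathbb{S}_n$.
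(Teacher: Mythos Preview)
The paper does not give its own proof of this lemma; it is quoted verbatim as Carpi's Lemma~9.1 and used as a black box. So there is no in-paper argument to compare your proposal against.

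That said, your plan is exactly the natural one and matches the structure of Carpi's original argument: one checks that the permutations $\psi_n(\texttt{1}),\dots,\psi_n(\texttt{m})$ pairwise commute, each has order $4$, and together they generate a copy of $(\mathbb{Z}/4\mathbb{Z})^m$ inside $\mathbb{S}_n$. You are also right that none of these three facts is formal---each hinges on the explicit design of the words $f_n(a)$. In Carpi's construction the images $\psi_n(a)$ turn out to be products of disjoint $4$-cycles with supports arranged so that commutativity and independence can be read off directly; once that is verified, your displayed factorization $\psi_n(v)=\prod_a\psi_n(a)^{|v|_a}$ and the isolation of each coordinate go through just as you describe. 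Your proposal is therefore a correct outline, with the honest caveat (which you already make) that the actual content is the concrete computation in $\mathbb{S}_n$ from the definition of $f_n$, which you have not carried out here.
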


\section{Constructing exponentially many threshold words}\label{Construction}

In this section, let $n\geq 27$ be a fixed integer, and let $m=\lfloor (n-3)/6\rfloor$ and $\ell=\lfloor n/2\rfloor$, as in the previous section.  Since $n\geq 27$, we have $m\geq 4$.  In order to prove that the threshold language $T_n$ grows exponentially, we construct an exponentially growing language $Z_m\subseteq A_m^*$ of words that contain no $\psi_n$-kernel repetitions.  If $n\geq 33$ (or equivalently, if $m\geq 5$), then we define $Z_m$ by modifying Carpi's construction of an infinite word $\alpha$ over $A_m$ that contains no $\psi_n$-kernel repetitions.  If $27\leq n\leq 32$ (or equivalently, if $m=4$), then we define a $3$-uniform substitution $g\colon A_4^*\rightarrow 2^{A_4^*}$, and let $Z_4$ be the set of all factors of words obtained by iterating $g$ on the letter $\tt{1}$.

\subsection*{Case I: $n\geq 33$}

We first recall the definition of $\alpha$, the infinite word over $A_m$ defined by Carpi~\cite{Carpi2007} that contains no $\psi_n$-kernel repetitions.  First of all, define $\beta=(b_i)_{i\geq 1}$, where
\[
b_i=\begin{cases}
\tt{1}, &\text{ if $i\equiv 1\pmod{3}$;}\\
\tt{2}, &\text{ if $i\equiv 2\pmod{3}$;}\\
b_{i/3}, &\text{ if $i\equiv 0\pmod{3}$.}
\end{cases}
\]
Now define $\alpha=(a_i)_{i\geq 1}$, where for all $i\geq 1$, we have
\[
a_i=\begin{cases}
\max\{a\in A_{m}\colon\ 4^{a-2} \text{ divides } i\}, & \text{ if $i$ is even};\\
b_{(i+1)/2}, & \text{ if $i$ is odd}.
\end{cases}
\]

Note that if $i\equiv 2$ (mod $4$), then $a_i=\tt{2}$.  Let $Z_m$ be the set of all finite words obtained from a prefix of $\alpha$ by exchanging any subset of these \tt{2}'s for \tt{1}'s.  To be precise, if $z=z_1z_2\cdots z_k$, then $z\in Z_m$ if and only if all of the following hold:
\begin{itemize}
\item $z_i\in\{\tt{1},\tt{2}\}$ if $i\equiv 2$ (mod $4$);
\item $z_i=\max\left\{a\in A_m\colon\ \text{$4^{a-2}$ divides $i$}\right\}$ if $i\equiv 0$ (mod $4$); and
\item $z_i=b_{(i+1)/2}$ if $i$ is odd. 
\end{itemize}
Note in particular that if $z=z_1z_2\cdots z_k$ is in $Z_m$, then $z_i\geq 3$ if and only if $i\equiv 0$ (mod $4$).

We claim that no word $z\in Z_m$ contains a $\psi_n$-kernel repetition.  The proof is essentially analogous to Carpi's proof that $\alpha$ contains no $\psi_n$-kernel repetitions.  We begin with a lemma about the lengths of factors in $Z_m$ that lie in $\ker(\psi_n)$.

\begin{lemma}[Adapted from Carpi~{\cite[Lemma~9.3]{Carpi2007}}]\label{Analogous}
Let $z\in Z_m$, and let $v$ be a factor of $z$.  If $v\in\ker(\psi_n)$, then $4^{m-1}$ divides $|v|$.
\end{lemma}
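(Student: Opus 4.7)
The plan is to mirror Carpi's argument for his Lemma~9.3, exploiting the observation that the counts $|v|_a$ entering the argument depend only on the \emph{positions} of the letters of $v$ inside $z$, not on the freely chosen values at the positions $\equiv 2 \pmod 4$. I would write $v = z_{s+1} z_{s+2} \cdots z_{s+L}$ with $L = |v|$, and for each $j \geq 1$ let $N_j$ denote the number of multiples of $4^j$ lying in the half-open interval $(s, s+L]$, so that $N_j = \lfloor (s+L)/4^j \rfloor - \lfloor s/4^j \rfloor$. From the definition of $Z_m$, every position $i \not\equiv 0 \pmod 4$ carries a letter in $\{\tt{1}, \tt{2}\}$, while every position $i \equiv 0 \pmod 4$ carries the letter $\min(\nu_4(i)+2,\, m) \geq 3$, determined entirely by the $4$-adic valuation $\nu_4(i)$. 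Consequently, $|v|_a = N_{a-2} - N_{a-1}$ for $3 \leq a \leq m-1$, $|v|_m = N_{m-2}$, and $|v|_1 + |v|_2 = L - N_1$.

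By Lemma~\ref{Carpi4}, $v \in \ker(\psi_n)$ means $4 \mid |v|_a$ for every $a \in A_m$. I would then slide down from $a = m$ to $a = 3$: the condition $4 \mid |v|_m$ gives $N_{m-2} \equiv 0 \pmod 4$, and each subsequent condition $4 \mid |v|_a = N_{a-2} - N_{a-1}$ propagates the congruence from $N_{a-1}$ to $N_{a-2}$. This establishes $N_j \equiv 0 \pmod 4$ for every $1 \leq j \leq m-2$. Combining $4 \mid |v|_1 + |v|_2 = L - N_1$ with $N_1 \equiv 0 \pmod 4$ then forces $L \equiv 0 \pmod 4$ as well.

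The proof would finish with a short induction showing $4^j \mid L$ for $1 \leq j \leq m-1$. The base $j = 1$ is what was just established. For the inductive step, once $4^j \mid L$ holds, the interval $(s, s+L]$ contains exactly $L/4^j$ multiples of $4^j$, so $N_j = L/4^j$; the congruence $N_j \equiv 0 \pmod 4$ then upgrades to $4^{j+1} \mid L$. Iterating up to $j = m-1$ yields the desired $4^{m-1} \mid L$.

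I do not anticipate a serious obstacle: the only nontrivial observation is that switching some \tt{2}'s at positions $\equiv 2 \pmod 4$ to \tt{1}'s leaves the counts $|v|_a$ for $a \geq 3$, as well as the sum $|v|_1 + |v|_2$, untouched. Once this is in hand, the argument reduces to the same bookkeeping as Carpi's proof for $\alpha$, powered by the clean identity $N_j = L/4^j$ (valid as soon as $4^j \mid L$) that drives the concluding induction.
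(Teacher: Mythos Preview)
Your proof is correct and rests on the same idea as the paper's: for each $b\geq 1$, count the positions in $v$ carrying letters $\geq b+2$ (equivalently, the multiples of $4^b$ in the index range) and invoke Lemma~\ref{Carpi4} to force this count to be divisible by $4$. The paper packages the argument as a single contradiction (assume $4^b\,\|\,|v|$ with $b\leq m-2$; then $\sum_{a\geq b+2}|v|_a=|v|/4^b$ must be divisible by $4$), whereas you unwind it as an explicit cascade $N_{m-2}\to\cdots\to N_1\to L$ followed by an induction, but the mathematical content is identical.
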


\begin{proof}
The statement is trivially true if $v=\varepsilon$, so assume $|v|>0$.  Set $|v|=4^bc$, where $4^b$ is the maximal power of $4$ dividing $|v|$.  Suppose, towards a contradiction, that $b\leq m-2$.  Since $v\in\ker(\psi_n)$, by Lemma~\ref{Carpi4}, we see that $4$ divides $|v|$, meaning $b\geq 1$.

Write $z=z_1z_2\cdots z_{|z|}$.  Then we have $v=z_iz_{i+1}\cdots z_{i+4^bc-1}$ for some $i\geq 1$.  By definition, for any $j\geq 1$, we have $z_j\geq b+2$ if and only if $4^b$ divides $j$.  (Since $b\geq 1$, we have $b+2\geq 3$, and hence $z_j\geq b+2$ implies $j\equiv 0$ (mod $4$).)  Thus, we have that the sum $\sum_{a=b+2}^{m}|v|_a$ is exactly the number of integers in the set $\{i,i+1,\dots,i+4^bc-1\}$ that are divisible by $4^b$, which is exactly $c$.  Since $v\in \ker(\psi_n)$, by Lemma~\ref{Carpi4}, we conclude that $4$ divides $c$, contradicting the maximality of $b$.
\end{proof}

Now, using Lemma~\ref{Analogous} in place of~\cite[Lemma~9.3]{Carpi2007}, a proof strictly analogous to that of~\cite[Proposition~9.4]{Carpi2007} gives the following.  The only tool in the proof that we have not covered here is~\cite[Lemma~9.2]{Carpi2007}, which is a short technical lemma about the repetitions in the word $\beta$, and which can be used without any modification.

\begin{proposition}\label{zeta}
Suppose that $n\geq 33$.  Then no word $z\in Z_m$ contains a $\psi_n$-kernel repetition.
\end{proposition}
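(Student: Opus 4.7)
The plan is to mimic Carpi's proof of Proposition~9.4 in~\cite{Carpi2007}, with Lemma~\ref{Analogous} playing the role of Carpi's Lemma~9.3 (the divisibility lemma for the infinite word $\alpha$). Suppose, toward a contradiction, that some $z=z_1z_2\cdots z_k$ in $Z_m$ contains a $\psi_n$-kernel repetition $v$, with period $q$ and factor $v'$ of length $q$ satisfying $v'\in\ker(\psi_n)$ and $(n-1)(|v|+1)\geq nq-3$. By Lemma~\ref{Analogous}, $4^{m-1}$ divides $q$; since $n\geq 33$ gives $m\geq 5$, we have $q\geq 4^{m-1}\geq 256$, and in particular $q$ is a multiple of $4$.

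Next, I would verify that the periodicity condition $z_i=z_{i+q}$ is automatically satisfied at the rigid positions of $z$. At positions $i\equiv 0\pmod{4}$, the value $z_i=\max\{a\in A_m\colon 4^{a-2}\mid i\}$ is determined by the largest power of $4$ dividing $i$, capped at $4^{m-2}$; since the largest power of $4$ dividing $q$ is at least $4^{m-1}$, the same capped maximum is attained at $i$ and at $i+q$, so $z_i=z_{i+q}$ holds for free. At positions $i\equiv 2\pmod{4}$, both $z_i$ and $z_{i+q}$ lie in $\{\tt{1},\tt{2}\}$ and the period merely constrains these bits to coincide, which is compatible with membership in $Z_m$. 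The only nontrivial constraint imposed by the period $q$ is thus on the odd positions, where $z_i=b_{(i+1)/2}$: this forces $b_j=b_{j+q/2}$ over the interval of odd-indexed positions covered by $v$, so $\beta$ has period $p=q/2$ on a corresponding factor.

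Then I would translate the kernel-repetition inequality $(n-1)(|v|+1)\geq nq-3$ into an inequality on the length of the induced repetition in $\beta$: after halving $q$ and tracking small boundary corrections coming from the alternation of odd and even positions, the induced factor of $\beta$ has period $p$ and length close to $np/(n-1)$, which is precisely the regime forbidden by Carpi's Lemma~9.2 in~\cite{Carpi2007}. Applying that lemma to the induced factor would produce the desired contradiction.

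The step I expect to be the main obstacle is the careful bookkeeping on lengths: verifying that after halving $q$ and accounting for the parity offsets between $v$ and the induced factor of $\beta$, the inequality $(n-1)(|v|+1)\geq nq-3$ transfers to a statement strong enough to trigger Carpi's Lemma~9.2. This is essentially the computation performed at the end of the proof of~\cite[Proposition~9.4]{Carpi2007}. Because the words of $Z_m$ coincide with $\alpha$ at every rigid position (odd $i$ and $i\equiv 0\pmod{4}$), Carpi's argument carries over essentially verbatim once Lemma~\ref{Analogous} supplies the key divisibility $4^{m-1}\mid q$.
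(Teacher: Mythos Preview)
Your proposal is correct and follows precisely the approach indicated in the paper: substitute Lemma~\ref{Analogous} for Carpi's Lemma~9.3 and then rerun the argument of~\cite[Proposition~9.4]{Carpi2007}, using~\cite[Lemma~9.2]{Carpi2007} to derive a contradiction from the induced repetition in $\beta$. Your extra observation that periodicity at positions $i\equiv 0\pmod{4}$ is automatic and at positions $i\equiv 2\pmod{4}$ is unconstrained is a helpful clarification, though strictly speaking only the implication at odd positions (yielding a period-$q/2$ factor of $\beta$) is needed for the contradiction.
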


\subsection*{Case II: $27\leq n\leq 32$}

Define a substitution $g:A_4^*\rightarrow 2^{A_4^*}$ by
\begin{align*}
g(\tt{1})&=\{\tt{112}\}\\
g(\tt{2})&=\{\tt{114}\}\\
g(\tt{3})&=\{\tt{113}\}\\
g(\tt{4})&=\{\tt{123},\tt{213}\}.
\end{align*}
We extend $g$ to $2^{A_4^*}$ by $g(W)=\bigcup_{w\in W}g(w)$, which allows us to iteratively apply $g$ to an initial word in $A_4^*$.  Let $Z_4=\Fact\{v\colon\ \text{$v\in g^n(\tt{1})$ for some $n\geq 1$}\}$, i.e., we have that $Z_4$ is the set of factors of all words obtained by iteratively applying $g$ to the initial word $\tt{1}$.
If a word $w\in A_4^*$ has period $p$ and the length $p$ prefix of $w$ is in $\ker(\psi)$, then we say that $p$ is a \emph{kernel period} of $w$.

\begin{proposition}\label{m4}
Suppose that $27\leq n\leq 32$.  Then no word in $Z_4$ contains a $\psi_n$-kernel repetition.
\end{proposition}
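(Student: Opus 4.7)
The plan is a descent argument via the substitution~$g$. I assume for contradiction that some $z\in Z_4$ contains a $\psi_n$-kernel repetition, and pick a minimal such factor~$v$ with period~$q$ and length-$q$ kernel factor~$v'$. The decisive observation is that in every image $g(\tt{a})$ the only letter from $\{\tt{3},\tt{4}\}$ appears in position~$3$; hence, in every word of $g^N(\tt{1})$, the letters~$\tt{3}$ and~$\tt{4}$ occur only at positions $\equiv 0\pmod 3$. If $v$ contains such a letter then its period~$q$ is a multiple of~$3$, and combined with Lemma~\ref{Carpi4} (which forces $|v'|_a\equiv 0\pmod 4$ for every~$a$, hence $4\mid q$) this gives $12\mid q$. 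The argument then splits according to whether $v$ contains any letter from $\{\tt{3},\tt{4}\}$.

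Case~1, $v\in\{\tt{1},\tt{2}\}^*$. A maximal $\{\tt{1},\tt{2}\}$-factor of a word in $Z_4$ is bounded by two $\{\tt{3},\tt{4}\}$-letters whose corresponding preimage letters are separated in $g^{N-1}(\tt{1})$ by a run of~$\tt{1}$s of length at most~$2$ (since position~$3$ of each block is never~$\tt{1}$), capping such a factor at $3\cdot 2+2=8$ letters. A direct enumeration gives the three length-$8$ strings $\tt{11211211}$, $\tt{11211212}$, $\tt{11211221}$, with letter-count pairs $(6,2),(5,3),(5,3)$. The kernel condition requires $4\mid|v'|_{\tt{1}}$ and $4\mid|v'|_{\tt{2}}$ together with $|v|\ge q$, which leaves only $q\in\{4,8\}$; the length-$4$ candidates $\tt{1111}$ and $\tt{2222}$ are ruled out by the run bound, and none of the length-$8$ candidates has the required $(4,4)$ counts.

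Case~2, $v$ contains a letter from $\{\tt{3},\tt{4}\}$. Here I would desubstitute along~$g$. Kernel preservation under~$g$ is immediate from an abelianization calculation: the two words in $g(\tt{4})$ share an abelianization, so $g$ induces a well-defined linear map on $\mathbb{Z}_4^4$, and a direct check shows this map is invertible modulo~$4$; consequently $g(w)\in\ker(\psi_n)$ if and only if $w\in\ker(\psi_n)$. Since $12\mid q$ and each triple in $\{\tt{112},\tt{114},\tt{113},\tt{123},\tt{213}\}$ determines its preimage letter uniquely, I would align~$v$ with the block-$3$ structure inside some $g^N(\tt{1})$ and extract a preimage $\tilde v\in Z_4$ of length $|v|/3$. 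Kernel preservation then supplies a length-$q/3$ kernel factor of $\tilde v$, the period $q/3$ of $\tilde v$ follows from block-level injectivity of~$g$, and the inequality $(n-1)(|v|+1)\ge nq-3$ descends (with room to spare) to $(n-1)(|\tilde v|+1)\ge n(q/3)-3$, so $\tilde v$ is a shorter $\psi_n$-kernel repetition in $Z_4$, contradicting minimality.

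The main obstacle will be the alignment step in Case~$2$: a general factor of $g^N(\tt{1})$ need not begin or end on a block boundary, and naively shifting~$v$ by up to two letters to align it can disturb either the period condition or the location of the kernel factor. I would first attempt to minimize at the preimage level---choosing $\tilde v\in Z_4$ of smallest length such that some block-aligned factor of $g(\tilde v)$ is a kernel repetition---to sidestep the misalignment. If a residual gap remains, the small values $q\in\{12,24,\dots\}$ below the point where the descent becomes clean can be verified by a short finite computation over the first few iterates of~$g$, in the spirit of the computer-assisted improvement of Proposition~\ref{CarpiShort}.
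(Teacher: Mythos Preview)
Your overall plan---descend through the substitution $g$ using invertibility of its abelianization modulo $4$---is exactly the paper's strategy, and your Case~1 analysis is essentially sound. The genuine gap is in Case~2, specifically the claim that the $\psi_n$-kernel inequality ``descends with room to spare.'' It does not. Once you account for the alignment loss you yourself flag, the preimage $\tilde v$ satisfies only $|\tilde v|\ge (|v|-4)/3$, and substituting this into $(n-1)(|\tilde v|+1)\ge n(q/3)-3$ requires $(n-1)|v|\ge nq+n-10$, whereas the hypothesis gives only $(n-1)|v|\ge nq-n-2$; these are compatible only for $n\le 4$. So after one descent step $\tilde v$ need not be a $\psi_n$-kernel repetition at all, and your minimality setup yields no contradiction. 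The difficulty is not confined to ``small $q$'': the inequality degrades by a fixed additive amount at \emph{every} step, so there is no threshold beyond which the descent becomes clean.

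The paper handles this by inverting your choice: it takes $v_0$ \emph{maximal} with its period (so that maximality, unlike the $\psi_n$-kernel inequality, is preserved under desubstitution), and instead of insisting each $v_i$ be a $\psi_n$-kernel repetition it tracks the weaker bound $|x_i|\le 31(|y_i|+2)$, which \emph{does} survive descent with the alignment loss. Iterating until $|y_r|\le 3$ then bounds $|v_r|\le 155$, but the finite check at this point is substantially larger than you anticipate: an exhaustive search turns up $200$ candidate words $v_r$ satisfying the degraded bound, and eliminating them requires a further argument one level back \emph{up}---computing, for each candidate $w$, the maximal repetitions with kernel period $3p_w$ inside $g(awb)$ over all extensions $awb\in Z_4$, and verifying that none of these satisfies the bound at level $r-1$. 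Your proposal would need both the switch to maximality (or some replacement mechanism allowing iteration) and this two-stage finite verification.
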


\begin{proof}
Suppose otherwise that the word $v_0\in Z_4$ is a $\psi_n$-kernel repetition.  Write $v_0=x_0y_0$, where $v_0$ has kernel period $|x_0|$.  Without loss of generality, we may assume that no extension of $v_0$ that lies in $Z_4$ has period $|x_0|$, i.e., that $v_0$ is a maximal repetition in $Z_4$.  From the definition of $\psi_n$-kernel repetition, we must have
\[
(n-1)(|v_0|+1)\geq n|x_0|-3,
\] 
or equivalently,
\[
|x_0|\leq (n-1)|y_0|+n+2.
\]
Since $n\leq 32$, we certainly have
\begin{align}\label{psikernelcondition}
|x_0|\leq 31|y_0|+34.
\end{align}
If $|y_0|\leq 3$, then we have $|x_0|\leq 127$, and hence $|v_0|\leq 130$.  We eliminate this possibility by exhaustive search, so we may assume that $|y_0|\geq 4$.

We can write $v_0=s_0v'_0p_0$ for some suffix $s_0$ of a word in $g(A_4)$,  some prefix $p_0$ of a word in $g(A_4)$, and some word $v'_0\in g(v_1)$, where $v_1\in Z_4$.  By inspection, we see that if $z$ is any factor of $Z_4$ of length $3$, and both $\pi_1z$ and $\pi_2z$ are prefixes of some word in $Z_4$, then $|\pi_1|\equiv |\pi_2|$ (mod $3$).  Since both $y_0$ and $x_0y_0=v_0$ are prefixes of $v_0$, and since $|y_0|> 3$, we conclude that $|x_0|$ is a multiple of $3$.

Recall that we have $v_0=s_0v'_0p_0$, where $v'_0\in g(v_1)$ for some word $v_1\in Z_4$.  Since $|s_0|\leq 2$ and $|p_0|\leq 2$, we have $v'_0\geq|v_0|-4\geq |x_0|$, and hence $v'_0$ has kernel period $|x_0|$.  Now write $v_1=x_1y_1$, where $3|x_1|=|x_0|$. Evidently, we have $3|y_1|+4\geq |y_0|$.  Note that $v_1$ has period $|x_1|$.  Further, since the frequency matrix of $g$ is invertible modulo $4$, we have $x_1\in \ker(\psi_n)$, and hence $|x_1|$ is a kernel period of $v_1$.  Since $v_0$ was a maximal repetition in $Z_4$, we see that $v_1$ is also maximal.

We may now repeat the process described above.  Eventually, for some $r\geq 1$, we reach a word $v_r\in Z_4$ that can be written $v_r=x_ry_r$, where $|x_r|$ is a kernel period of $v_r$, and $|y_r|\leq 3$.  For all $1\leq i\leq r$, one proves by induction that $|x_0|=3^i|x_i|$ and $|y_0|\leq 3^i|y_i|+4\sum_{j=0}^{i-1}3^j=3^i|y_i|+2(3^i-1)$.
Thus, from (\ref{psikernelcondition}), we obtain
\begin{align*}
3^i|x_i|\leq 31\left[3^i|y_i|+2(3^i-1)\right]+34
\end{align*}
for all $1\leq i\leq r$.
Dividing through by $3^i$, and then simplifying, we obtain
\begin{align}\label{Bound}
|x_i|\leq 31\left[|y_i|+2\right]-\frac{28}{3^i}\leq 31\left[|y_i|+2\right] 
\end{align}
for all $1\leq i\leq r$.  

Since $|y_r|\leq 3$, we obtain $|x_r|\leq 155$ from (\ref{Bound}).  By Lemma~\ref{Carpi4}, the kernel period $|x_r|$ of $v_r$ is a multiple of $4$, so in fact we have $|x_r|\leq 152$, and in turn $|v_r|\leq 155$.  By exhaustive search of all words in $Z_4$ of length at most $155$, we find that $v_r\in W$, where $W$ is a set containing exactly $200$ words.  Indeed, the set $W$ contains
\begin{itemize}
\item 160 words with kernel period 76 and length 77,
\item 36 words with kernel period 92 and length 93, and
\item 4 words with kernel period 112 and length 114.
\end{itemize}
For every $w\in W$, let
\[
E_w=\Fact\left(\{g(awb)\colon\ \text{$a,b\in A_4$, $awb\in Z_4$}\}\right).
\] 
Evidently, we have $v_{r-1}\in E_{v_r}$.  For every word $w\in W$, let $p_w$ denote the kernel period of $w$, and let $q_w$ denote the maximum length of a repetition with kernel period $3p_w$ across all words in $E_w$.  By exhaustive check, for every $w\in W$, we find $3p_w> 31\left[q_w-3p_w+2\right]$.  However, the word $v_{r-1}=x_{r-1}y_{r-1}$ must be in $E_{v_r}$, and by (\ref{Bound}), we have 
\[
3p_{v_r}=|x_{r-1}|\leq 31\left[|y_{r-1}|+2\right]\leq 31\left[q_{v_r}-3p_{v_r}+2\right].
\]  
This is a contradiction.  We conclude that the set $Z_4$ contains no $\psi_n$-kernel repetitions.
\end{proof}

We now proceed with the proof of our main result.

\begin{proof}[Proof of Theorem~\ref{main}]
First suppose that $n\geq 33$.  By Proposition~\ref{zeta}, no word $z\in Z_m$ contains a $\psi_n$-kernel repetition.  From the definition of $Z_m$, one easily proves that
\[
C_{Z_m}(k)=\Omega\left(2^{k/4}\right).
\]
By Theorem~\ref{CarpiMain}, for every word $z\in Z_m$, the word $\gamma_n(f_n(z))$ is in the threshold language $T_n$ of order $n$.  Moreover, the maps $\gamma_n$ and $f_n$ are injective, and $|\gamma_n(f_n(z))|/|z|=(n-1)(\ell+1)$, since $f_n$ is $(n-1)(\ell+1)$-uniform and $\gamma_n$ preserves length.  It follows that 
\[
C_{T_n}(k)=\Omega\left(2^{k/4(n-1)(\ell+1)}\right).
\]  
Since $n$, and hence $\ell$, are fixed, the quantity $(n-1)(\ell+1)$ is a constant, and we conclude that the language $T_n$ grows exponentially.

Suppose now that $27\leq n\leq 32$.  By Proposition~\ref{m4}, no word $z\in Z_4$ contains a $\psi_n$-kernel repetition.  Since $|g^4(a)|\geq 4$ for all $a\in A_4$, we have
\[
C_{Z_4}(k)=\Omega\left(4^{k/81}\right).
\]
By the same argument as above, we see that 
\[
C_{T_n}(k)=\Omega\left(4^{k/81(n-1)(\ell+1)}\right),
\]
and we conclude that the language $T_n$ grows exponentially.
\end{proof}

\section{Conclusion}

Conjecture~\ref{OchemConjecture} has now been established for all $n\not\in\{12,14,\dots,26\}$.  We remark that different techniques than those presented here will be needed to establish Conjecture~\ref{OchemConjecture} in all but one of these remaining cases.  (It appears that the techniques presented here could potentially be used for $n=22$, but we do not pursue this isolated case.)  For example, let $n=26$.  Then we have $m=3$.  By computer search, for every letter $a\in A_m$, the word $f_n(a\tt{3})$ contains a $15$-stabilizing word of length $350$, which is less than $15(n-1)=375$.  By another computer search, the longest word on $\{\tt{1},\tt{2}\}$ avoiding $\psi_{n}$-kernel repetitions has length $15$.  So there are only finitely many words in $A_m^*$ that avoid both $\psi_n$-kernel repetitions and the forbidden stabilizing words.  Similar arguments lead to the same conclusion for all $n\in\{12,14,16,18,20,24\}$.

For a language $L$, the value
$\alpha(L)=\limsup_{k\rightarrow\infty}(C_L(k))^{1/k}$
is called the \emph{growth rate} of $L$.  If $L$ is factorial (i.e., closed under taking factors), then by an application of Fekete's Lemma, we can safely replace $\limsup$ by $\lim$ in this definition.  If $\alpha(L)>1$, then the language $L$ grows exponentially, and in this case, $\alpha(L)$ is a good description of how quickly the language grows.

For all $n\geq 33$, we have established that $\alpha(T_n)\geq 2^{1/4(n-1)\left(\ell+1\right)}$.  However, this lower bound tends to $1$ as $n$ tends to infinity, and this seems far from best possible.  Indeed, Shur and Gorbunova proposed the following conjecture concerning the asymptotic behaviour of $\alpha(T_n)$.

\begin{conjecture}[Shur and Gorbunova~\cite{ShurGorbunova2010}]
The sequence $\{\alpha(T_n)\}$ of the growth rates of threshold languages converges to a limit $\hat{\alpha}\approx 1.242$ as $n$ tends to infinity.
\end{conjecture}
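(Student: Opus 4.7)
The plan is to analyze $\alpha(T_n)$ through Pansiot's binary encoding reviewed in Section~\ref{Review}. For $n\geq 5$, every threshold word over $A_n$ is $(n-1)/(n-2)$-free (since $n/(n-1)<(n-1)/(n-2)$), so by Pansiot's theorem it arises, up to a permutation of $A_n$, from a unique binary word via $\gamma_n\colon B^*\to A_n^*$. Writing $U_n\subseteq B^*$ for the language of binary $\gamma_n$-preimages of threshold words, and noting that $\gamma_n$ is length-preserving while the renamings of $A_n$ contribute only a constant factor $n!$, we obtain $\alpha(T_n)=\alpha(U_n)$. Proposition~\ref{CarpiReform} further characterizes $U_n$ as the binary language avoiding the short $k$-stabilizing words and the kernel repetitions of order $n$. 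The conjecture thus becomes the statement that $\alpha(U_n)\to\hat\alpha$ as $n\to\infty$, with all $U_n$ now living inside the common shift space $B^{\mathbb N}$, where the sequence of factorial sublanguages $(U_n)$ can be studied on a single footing.

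The core of the argument would be to establish matching lower and upper bounds on $\alpha(U_n)$ that converge to the same constant. For the lower bound, one would refine the substitutive constructions of Section~\ref{Construction} (and of Shur and Gorbunova~\cite{ShurGorbunova2010}) to produce, for each $n$, a factorial sublanguage of $U_n$ generated by a morphism whose incidence matrix has Perron eigenvalue approaching $\hat\alpha$. For the upper bound, one would apply a transfer-matrix or cluster-expansion analysis, bounding $\alpha(U_n)$ from above via the enumerating series of short $U_n$-factors together with the combinatorial structure of the kernel-repetition constraint, and show that as $n$ grows this upper bound converges to the same value. Both bounds must be made uniformly quantitative so as to identify $\hat\alpha$ exactly, rather than merely proving convergence to \emph{some} limit.

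The principal obstacle is pinning down the limiting structure that fixes the value $\hat\alpha\approx 1.242$. As $n\to\infty$, the kernel-repetition condition $(n-1)(|v|+1)\geq nq-3$ degenerates to $|v|\geq q-1$, essentially forbidding every period $q$ whose length-$q$ prefix lies in $\ker(\varphi_n)$; but the subgroups $\ker(\varphi_n)\leq\mathbb S_n$ live in different ambient groups, so the families of forbidden binary factors do not possess an obvious direct limit. One plausible route is to embed the $\varphi_n$ into an action on $\mathbb N$ via the inclusions $\mathbb S_n\hookrightarrow\mathbb S_{n+1}$, then define a limit subshift $U_\infty\subseteq B^{\mathbb N}$ to which the $U_n$ converge in a suitable sense. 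Since growth rates of subshifts do not in general behave continuously under Hausdorff or cylindrical limits, explicit quantitative matching bounds, rather than any purely abstract limit argument, are likely required; combined with the need to prove that the resulting limit equals the experimentally observed $1.242\ldots$, this is where the bulk of the difficulty lies, and it is precisely why the Shur--Gorbunova conjecture has remained open.
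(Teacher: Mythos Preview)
The statement you are addressing is a \emph{conjecture} that the paper explicitly presents as open; the paper offers no proof of it. Immediately after stating the conjecture the authors write only that ``a wide variety of evidence supports this conjecture'' and refer the reader to the literature, then close by calling the improvement of the lower bound on $\alpha(T_n)$ ``a significant open problem.'' There is therefore no proof in the paper against which to compare your proposal.

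Your proposal is itself not a proof but a research outline, and you acknowledge as much in the final sentence. The reduction $\alpha(T_n)=\alpha(U_n)$ via Pansiot's encoding is sound and is a reasonable starting point, but the substantive parts --- producing substitutive sublanguages whose Perron eigenvalues tend to $\hat\alpha$, and a matching transfer-matrix upper bound that also converges to $\hat\alpha$ --- are stated as goals rather than carried out. In particular, the constructions of Section~\ref{Construction} give growth rates bounded below by $2^{1/4(n-1)(\ell+1)}$, which tends to $1$, not to $1.242$, so ``refining'' them into something with the correct limiting eigenvalue is an entirely new problem, not a refinement. Likewise, your proposed limit subshift $U_\infty$ is only a heuristic: you yourself note that growth rates are not continuous under the kind of limit you have in mind, so this device cannot by itself determine $\hat\alpha$. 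In short, your write-up correctly identifies where the difficulty lies but does not resolve it, and neither does the paper.
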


A wide variety of evidence supports this conjecture -- we refer the reader to~\cite{ShurGorbunova2010,GorbunovaShur2011,Shur2014,Shur2012} for details.  For a fixed $n$, there are efficient methods for determining upper bounds on $\alpha(T_n)$ which appear to be rather sharp, even for relatively large values of $n$ (see~\cite{ShurGorbunova2010}, for example).  Establishing a sharp lower bound on $\alpha(T_n)$ appears to be a more difficult problem.  We note that a good lower bound on $\alpha(T_3)$ is given by Kolpakov~\cite{Kolpakov2007} using a method that requires some significant computation.  For all $n\in\{5,6,\dots,10\}$, Kolpakov and Rao~\cite{KolpakovRao2011} give lower bounds for $\alpha(T_n)$ using a similar method.  They were then able to estimate the value of $\alpha(T_n)$ with precision 0.005 using upper bounds obtained by the method of Shur and Gorbunova~\cite{ShurGorbunova2010}.  

Thus, in addition to resolving the finitely many remaining cases of Conjecture~\ref{OchemConjecture}, improving our lower bound for $\alpha(T_n)$ when $n\geq 27$ remains a significant open problem.

\end{document}